\documentclass[12pt,a4paper]{article}
\usepackage{a4wide}
\usepackage{amsmath,amsthm,amssymb}
\usepackage{array,enumerate}
\usepackage{indentfirst}	
\usepackage{authblk, color,hyperref}
\usepackage{graphicx}

%

\newtheorem{theorem}{Theorem}[section]

\newtheorem{lemma}[theorem]{Lemma}

\newtheorem{remark}[theorem]{Remark}
\hyphenpenalty=5000
\tolerance=1000

\usepackage{tikz}
\usetikzlibrary{shapes}
\newcommand{\g}{\Gamma}
\newcommand{\z}{\zeta}

\title{Non-bipartite distance-regular graphs with a small smallest eigenvalue}
\author[a]{Zhi Qiao}
\author[b]{Yifan Jing}
\author[c]{Jack Koolen\thanks{Corresponding author}\setcounter{footnote}{-1}\footnote{E-mail addresses: zhiqiao@sicnu.edu.cn (Z. Qiao), yifanjing17@gmail.com (Y. Jing), koolen@ustc.edu.cn (J. Koolen)}}
\affil[a]{School of Mathematical Sciences, Sichuan Normal University, 610068, Sichuan, PR China}
\affil[b]{Department of Mathematics, University of Illinois at Urbana Champaign, Urbana, IL, 61801, USA}
\affil[c]{School of Mathematical Sciences, University of Science and Technology of China, Wen-Tsun Wu Key Laboratory of the Chinese Academy of Sciences, 230026, Anhui, PR China}
\date{}

\begin{document}
\maketitle

\begin{abstract}
In 2017, Qiao and Koolen showed that 
for any fixed integer $D\geq 3$, there are only finitely many such graphs with $\theta_{\min}\leq -\alpha k$, where $0<\alpha<1$ is any fixed number. 
In this paper, we will study non-bipartite distance-regular graphs with relatively small $\theta_{\min}$ compared with $k$. 
In particular, we will show that if $\theta_{\min}$ is relatively close to $-k$, then the odd girth $g$ must be large. Also we will classify the non-bipartite distance-regular graphs with $\theta_{\min} \leq \frac{D-1}{D}$ for $D =4,5$.

{\bf Key words}: Distance-regular graphs, Smallest eigenvalue, Odd girth

{\bf AMS classification}: 05C75, 05E30, 05C50
\end{abstract}

\section{Introduction}
The {\em odd girth} of a non-bipartite graph is the length of its shortest odd cycle. Let $\g$ be a non-bipartite distance-regular graph with valency $k$, diameter $D$, odd girth $g$ and smallest eigenvalue $\theta_{\min}$. 
In \cite{KQ17}, Qiao and Koolen showed that 
for any fixed integer $D\geq 3$, there are only finitely many such graphs with $\theta_{\min}\leq -\alpha k$, where $0<\alpha<1$ is any fixed number. 
In this paper, we will study non-bipartite distance-regular graphs with relatively small $\theta_{\min}$ compared with $k$. 
In the next result, we will show that if $\theta_{\min}$ is relatively close to $-k$, then the odd girth $g$ must be large. 
\begin{theorem} 	\label{thm:main1}
	Let $\g$ be a non-bipartite distance-regular graph with valency $k$ and odd girth $g$, having smallest eigenvalue $\theta_{\min}$. 
	Then there exists a constant $\varepsilon (g)>0$ such that $\theta_{\min}\geq -(1-\varepsilon(g))k$.
\end{theorem}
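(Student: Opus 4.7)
The plan is to combine a standard structural consequence of the odd-girth hypothesis with a spectral analysis of the tridiagonal intersection matrix $B_1$ of $\g$.

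\emph{Structural step.} Since any closed walk of odd length $m<g$ in $\g$ would contain an odd cycle of length $\le m<g$, which cannot exist, the entry $(B_1^m)_{00}$ --- which equals the number of such walks at any given vertex by distance-regularity --- vanishes for every odd $m\le g-2$. Expanding these vanishings inductively (for instance, $(B_1^3)_{00}=ka_1$ forces $a_1=0$; assuming $a_1=0$, the next identity $(B_1^5)_{00}=kb_1c_2a_2$ forces $a_2=0$; and so on) yields the classical structural fact $a_0=a_1=\dots=a_{t-1}=0$, where $t:=(g-1)/2$.

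\emph{Spectral reformulation.} The target bound $\theta_{\min}\ge -(1-\e(g))k$ is equivalent to positive semi-definiteness of $\bar B_1+(1-\e(g))k\,I$, where $\bar B_1$ is the symmetrization of $B_1$ (with the same spectrum as $\g$). Because $\bar B_1$ is symmetric tridiagonal, PSD-ness after such a shift is governed by the Jacobi continued fraction
\[
\xi_0=\lambda,\qquad \xi_i=a_i+\lambda-\frac{b_{i-1}c_i}{\xi_{i-1}},\quad 1\le i\le D,
\]
which must remain non-negative throughout; the threshold value of $\lambda$ at which some $\xi_i$ first hits zero equals $|\theta_{\min}|$. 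On the initial block $0\le i<t$ the diagonal entries $a_i$ vanish, so the recurrence becomes of pure Chebyshev type, and its behavior for $\lambda$ near $k$ can be tracked via the trigonometric substitution $\lambda=2\sqrt R\cos\phi$.

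\emph{Estimation and main obstacle.} In the model constant-coefficient case $b_{i-1}c_i\equiv R$, the threshold for $\xi_i>0$ across $t$ successive steps is $\lambda_*=2\sqrt R\cos(\pi/(t+2))$. Combined with $b_0c_1=k$, the DRG constraint $b_i+c_i=k$ that holds throughout the zero-diagonal block, and the monotonicity of $c_i$ and $b_i$, this suggests an effective bound $|\theta_{\min}|\le (1-\e(g))k$ with $\e(g)\gtrsim 1-\cos(\pi/g)\sim \pi^2/(2g^2)$, the extremal case being the odd cycle $C_g$ itself. The principal obstacle is that the products $b_{i-1}c_i$ need not be bounded by $k$; once $c_i$ grows, a direct Chebyshev comparison is no longer applicable. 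Overcoming this requires either a sharpened estimate exploiting further DRG inequalities (Krein or absolute bound, which rule out configurations where $c_i$ grows too fast relative to $b_{i-1}$), or a compactness/finiteness reduction via the Bang--Koolen--Moulton theorem (only finitely many DRGs of any fixed valency $k\ge 3$), which would dispose of the small-valency regime separately and leave only the large-$k$ case, where $|\theta_{\min}|/k$ is more readily controlled.
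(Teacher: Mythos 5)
Your plan takes a genuinely different route from the paper, but it is not a complete proof: you explicitly flag a gap (the products $b_{i-1}c_i$ need not stay near $k$), and the fixes you sketch do not close it.

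The paper's proof does not analyze the tridiagonal intersection matrix directly. Instead it uses the $g$-gon embedding inequality $\sum_{i=0}^{t}p_i(\eta)u_i\ge 0$ (Lemma~\ref{lm:pu}, from \cite{BKP15}), where $(u_i)$ is the standard sequence for $\theta_{\min}$ and $\eta$ is any eigenvalue of the $g$-gon. The key dichotomy is on the size of $c_t$. When $c_t\le\zeta k$, one shows $u_i=(\theta/k)^i+O(\zeta)$ for $i\le t$ and plugs this into the $g$-gon inequality with the strategically chosen eigenvalue $\eta=2\cos\frac{2\pi(t-1)}{g}$; this forces $\theta/k$ away from $-1$, with the cushion depending only on $g$. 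When $c_t>\zeta k$, the paper splits further: if $c_t>1$, iterating the inequality $c_{4t-1}\ge 2c_t$ (from \cite{DKT}) bounds the diameter by $4t/\zeta^2$, and \cite[Theorem 1.1]{KQ17} gives finiteness for fixed diameter; if $c_t=1$, then $k<1/\zeta$ and one invokes finiteness for fixed valency. In both subcases the desired bound follows by taking the extremal graph in the finite family.

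The concrete gap in your proposal is the case where $c_t$ is large (comparable to $k$) and $k$ is also large. There the Chebyshev comparison fails (as you note, $b_{i-1}c_i$ can be on the order of $k^2$, not $k$), and neither of your fixes applies: a finiteness result for fixed valency helps only when $k$ is bounded, and the Krein/absolute-bound suggestion is too vague to verify — there is no evident reason those inequalities control the growth of $c_i$ relative to $b_{i-1}$ in the required quantitative way. What is genuinely needed here is the observation that rapid growth of $c_i$ forces a \emph{bounded diameter}, after which one can appeal to the finiteness result of \cite{KQ17} for DRGs of fixed diameter with very small $\theta_{\min}$. Your plan never produces a diameter bound, so it cannot dispatch this regime. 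In short: the structural step ($a_i=0$ for $i<t$) and the general philosophy (a quantitative spectral bound plus a finiteness reduction for the remaining cases) are sound, but the core estimate is unproven and the case analysis is missing the crucial bounded-diameter branch.
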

\begin{remark}
	The positive constant $\varepsilon(g)$ goes to $0$ as the odd girth $g$ goes to $\infty$. For example, the $(2t+1)$-gon has valency $k=2$, odd girth $g=2t+1$ and smallest eigenvalue $\theta_{\min}=2\cos(\frac{2t\pi}{2t+1})$. Thus, $\varepsilon(g) \leq 1+\frac{\theta_{\min}}{k}=2\cos^2(\frac{t\pi}{2t+1})$. 
\end{remark}

In \cite{KQ17}, Qiao and Koolen classified non-bipartite distance-regular graphs with valency $k$, diameter $D\leq 3$ and smallest eigenvalue $\theta_{\min}\leq -k/2$. 
Using Theorem \ref{thm:main1}, we will classify non-bipartite distance-regular graphs with valency $k$, diameter $D$ and smallest eigenvalue $\theta_{\min}\leq -\frac{D-1}{D}k$, when $D=4$ or $5$. 

\begin{theorem}	\label{thm:main2}
	Let $\g$ be a non-bipartite distance-regular graph with valency $k$, diameter $D$ and smallest eigenvalue $\theta_{\min}\leq -\frac{D-1}{D}k$. 
	\begin{enumerate}[i)]
		\item If $D=4$, then $\g$ is one of the following graph
		\begin{enumerate}[a)]
			\item the Coxeter graph with intersection array $\{3,2,2,1;1,1,1,2\}$,
			\item the 9-gon with intersection array $\{2,1,1,1;1,1,1,1\}$,
			\item the Odd graph $O_5$ with intersection array $\{5,4,4,3;1,1,2,2\}$,
			\item the folded $9$-cube with intersection array $\{9,8,7,6;1,2,3,4\}$.
		\end{enumerate}
		\item If $D=5$, then $\g$ is one of the following graph
		\begin{enumerate}[a)]
			\item the 11-gon with intersection array $\{2,1,1,1,1;1,1,1,1,1\}$,
			\item the Odd graph $O_6$ with intersection array $\{6,5,5,4,4;1,1,2,2,3\}$,
			\item the folded $11$-cube with intersection array $\{11,10,9,8,7;1,2,3,4,5\}$.
		\end{enumerate}
	\end{enumerate}
\end{theorem}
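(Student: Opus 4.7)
The plan is to combine Theorem \ref{thm:main1}, which forces the odd girth to be large, with the universal inequality $g\leq 2D+1$ for non-bipartite distance-regular graphs of diameter $D$, and then to carry out a case analysis on the resulting finite list of possible intersection arrays. Concretely, the hypothesis $\theta_{\min}\leq -\frac{D-1}{D}k$ combined with Theorem \ref{thm:main1} gives $\varepsilon(g)\leq 1/D$; a quantitative form of that theorem (of the sort foreshadowed by the Remark, where $\varepsilon(g)$ is controlled by an explicit function of $g$) then provides a numerical lower bound on $g$. Intersecting this lower bound with $g\leq 2D+1$ confines $g$ to a very short list, which one expects to be $g\in\{7,9\}$ for $D=4$ and $g=11$ for $D=5$.

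Next, for each admissible value of $g$ I would translate the odd girth condition into $a_{1}=\cdots=a_{(g-3)/2}=0$ and $a_{(g-1)/2}\neq 0$ on the intersection array. When $g=2D+1$ the graph is a generalised odd graph, and a careful inspection of the feasible arrays (exploiting monotonicity of $c_{i}$, integrality of the standard multiplicities, Krein and absolute bounds, and the smallness of $D$) should narrow the possibilities down to the $(2D+1)$-gon, the Odd graph $O_{D+1}$, and the folded $(2D+1)$-cube. The case $g=2D-1$, which arises only for $D=4$, requires an analogous but slightly longer enumeration, sharpened by the numerical inequality $\theta_{\min}\leq -9/4$, and should isolate the Coxeter array $\{3,2,2,1;1,1,1,2\}$. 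For every intersection array surviving this stage, I then compute $\theta_{\min}$ as the smallest root of the characteristic polynomial of the tridiagonal intersection matrix $L_{1}$ and retain only those satisfying the eigenvalue hypothesis.

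The principal obstacle is the structural classification step. Even after Theorem \ref{thm:main1} forces $g$ close to the maximum value $2D+1$, there is no general classification of distance-regular graphs with odd girth $2D+1$ available, so each potential intersection array must be eliminated by hand from the feasibility conditions rather than by quoting a black-box theorem. The restriction $D\in\{4,5\}$ is essential: it keeps the resulting Diophantine system small enough to be dispatched by a finite, explicit case analysis, and it is precisely the reason why the theorem does not extend effortlessly to larger diameters.
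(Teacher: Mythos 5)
Your high-level strategy (force the odd girth large, intersect with $g\leq 2D+1$, then enumerate) matches the spirit of the paper, but there are two places where the argument as sketched does not go through.

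First, you cannot obtain an explicit lower bound on $g$ by ``inverting'' Theorem \ref{thm:main1}. That theorem is existential: the constant $\varepsilon(g)$ is built from $\varepsilon_1$, $\varepsilon_2$, $\varepsilon_3$, and two of those ($\varepsilon_2$, $\varepsilon_3$) arise from finiteness theorems (\cite[Theorem 1.1]{KQ17}, \cite[Theorem 1.1]{BDK15}) that do not come with effective constants, and the Remark after Theorem \ref{thm:main1} only gives an \emph{upper} bound $\varepsilon(g)\leq 2\cos^2(\tfrac{t\pi}{2t+1})$, which is the wrong direction. What the paper actually does is bypass Theorem \ref{thm:main1} entirely and work directly with Lemma \ref{lm:pu}: after reducing to $k\geq 5$ via the cubic and quartic classifications (\cite{BBS86}, \cite{BK99}), it substitutes specific $\eta$ (an eigenvalue of the $g$-gon) into $\sum_{i=0}^t p_i(\eta)u_i\geq 0$ for each small $t=2,3,4$ in turn, each time deriving a concrete bound on $k$ that contradicts $k\geq 5$ (or a small enough bound to check by hand). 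This is the mechanism that forces $a_1=\cdots=a_{D-1}=0$; Theorem \ref{thm:main1} is not invoked as a black box.

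Second, once one knows $a_1=\cdots=a_{D-1}=0$ (i.e.\ $g=2D+1$), the set of feasible intersection arrays is still infinite a priori, and you have not indicated how $k$ gets bounded. This is the crux. The paper estimates $|u_2|,\ldots,|u_{D-1}|$ from below via Equation (\ref{eq:absu}), bounds $c_D/k$ from below using the trace inequality $k^2+\theta_{\min}^2\leq \operatorname{tr}(L^2)$, and feeds these into the Biggs multiplicity bound of Lemma \ref{lm:biggs} together with the absolute bound $k\leq m$ (\cite[Theorem 4.4.4]{BCN}) to conclude $k\leq 35$ (resp.\ $k\leq 71$). Without some such quantitative mechanism, the ``finite, explicit case analysis'' you appeal to is not actually finite. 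Finally, a small inaccuracy: the case $g=2D-1$ must also be addressed for $D=5$ (this is the $a_4\neq 0$ subcase in the paper, which is shown to produce no graphs); it is not special to $D=4$. For $D=4$ the Coxeter graph is in fact swept up by the $k\leq 4$ classifications rather than by a fresh $g=7$ enumeration.
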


This paper is organized as follows. 
In the next section, we give the definitions and some preliminary results. 
In section \ref{sec:main1}, we give a proof of Theorem \ref{thm:main1}. 
In the last section, we give a proof of Theorem \ref{thm:main2}. 

\section{Preliminaries}

For more background, see \cite{BCN} and \cite{DKT}. 

All the graphs considered in this paper are finite, undirected and simple. 
Let $\Gamma$ be a graph with vertex set $V=V(\Gamma)$ and edge set $E=E(\Gamma)$. 
Denote $x\sim y$ if the vertices $x,y\in V$ are adjacent. 
The {\em distance} $d(x,y)=d_\Gamma(x,y)$ between two vertices $x,y\in V(\g)$ is the length of a shortest path connecting $x$ and $y$. 
The maximum distance between two vertices in $\Gamma$ is the {\em diameter} $D=D(\Gamma)$. 
We use $\Gamma_i(x)$ for the set of vertices at distance $i$ from $x$ and write, for the sake of simplicity, $\Gamma(x):=\Gamma_1(x)$. 
The {\em degree} of $x$ is the number $|\Gamma(x)|$ of vertices adjacent to it. 
A graph is {\em regular with valency $k$} if the degree of each of its vertices is $k$.
The {\em girth} and {\em odd girth} of a graph is the length of its shortest cycle, and shortest odd cycle, respectively. 
A graph Γ is called {\em bipartite} if it has no odd cycle.

A connected graph $\Gamma$ with diameter $D$ is called {\em distance-regular} if there are integers $b_i$, $c_i$ ($i=0,1,\ldots,D$) such that for any two vertices $x,y\in V(\Gamma)$ with $d(x,y)=i$, there are exactly $c_i$ neighbors of $y$ in $\Gamma_{i-1}(x)$ and $b_i$ neighbors of $y$ in $\Gamma_{i+1}(x)$, where we define $b_D=c_0=0$. 
In particular, $\Gamma$ is a regular graph with valency $k:=b_0$. 
We define $a_i:=k-b_i-c_i$ $(i=0,1,\ldots,D)$ for notational convenience. Note that $a_i =|\Gamma(y)\cap\Gamma_i(x)|$ holds for any two vertices $x,y$ with $d(x,y)=i$ $(i=0,1,\ldots,D)$.

For a distance-regular graph $\Gamma$ and a vertex $x\in V(\Gamma)$, we denote $k_i:=|\Gamma_i(x)|$ and $p_{ij}^h:=|\{w\mid w\in \g_i(x)\cap\g_j(y)\}|$ for any $y\in\g_h(x)$. 
It is easy to see that $k_i = b_0b_1\cdots b_{i-1}/(c_1c_2\cdots c_i)$ and hence it does not depend on $x$. 
The numbers $a_i$ , $b_i$ and $c_i$ ($i=0,1,\ldots,D$) are called the {\em intersection numbers}, and the array $\{b_0,b_1,\ldots,b_{D-1};c_1,c_2,\ldots,c_D\}$ is called the {\em intersection array} of $\g$. 
The matrix $L$ is called the {\em intersection matrix} of $\g$, where 
$$
L=\begin{pmatrix}
	a_0 & b_0 & 0     &       &       &         \\
	c_1 & a_1  & b_1    &       & 0     &         \\
	    & c_2 & a_2    & \cdot &       &         \\
	    &     & \cdot & \cdot & \cdot &         \\
	    & 0   &       & \cdot & \cdot & b_{D-1} \\
	    &     &       &       & c_D   & a_D
\end{pmatrix}. 
$$

Let $\g$ be a distance-regular graph with $v$ vertices and diameter $D$. 
Let $A_i$ $(i=0,1,\ldots,D)$ be the $(0,1)$-matrix whose rows and columns are indexed by the vertices of $\g$ and the $(x, y)$-entry is $1$ whenever $d(x,y)=i$ and $0$ otherwise. 
We call $A_i$ the {\em distance-$i$ matrix} and $A:=A_1$ the {\em adjacency matrix} of $\g$. 
The {\em eigenvalues} $\theta_0>\theta_1>\cdots>\theta_D$ of the graph $\Gamma$ are just the eigenvalues of its adjacency matrix $A$. 
We denote $m_i$ the {\em multiplicity} of $\theta_i$. 
Note that the $D+1$ distinct eigenvalues of $\g$ are precisely the eigenvalues of $L$ (see \cite[Proposition 2.7]{DKT}). 

For each eigenvalue $\theta_i$ of $\Gamma$, let $U_i$ be a matrix with its columns forming an orthonormal basis for the eigenspace associated with $\theta_i$. 
And $E_i:=U_i U_i^T$ is called the {\em minimal idempotent} associated with $\theta_i$, satisfying $E_i E_j= \delta_{ij} E_j$ and $A E_i=\theta_i E_i$, where $\delta_{ij}$ is the Kronecker delta. 
Note that $vE_0$ is the all-ones matrix $J$. 

The set of distance matrices $\{A_0=I,A_1,A_2,\ldots,A_D\}$ forms a basis of a commutative $\mathbb R$-algebra $\mathcal A$, known as the {\em Bose-Mesner algebra}. 
The set of minimal idempotents $\{E_0=\frac{1}{v}J,E_1,E_2,\ldots,E_D\}$ is another basis of $\mathcal A$. 
There exist $(D+1)\times(D+1)$ matrices $P$ and $Q$ (see \cite[p.45]{BCN}), such that the following relations hold 
\begin{equation}	\label{eq:pq}
A_i=\sum_{j=0}^D P_{ji}E_j \quad \text{and} \quad E_i=\frac{1}{v}\sum_{j=0}^D Q_{ji}A_j\quad (i=0,1,\ldots,D).
\end{equation}
Note that  $Q_{0i}=m_i$ (see \cite[Lemma 2.2.1]{BCN}). 

Let $E_i=U_i U_i^T$ be the minimal idempotent associated with $\theta_i$, where the columns of $U_i$ form an orthonormal basis of the eigenspace associated with $\theta_i$. 
We denote the $x$-th row of $\sqrt{v/m_i}U_i$ by $\hat{x}$. 
Note that $E_i\circ A_j=\frac{1}{v}Q_{ji}A_j$, hence all the vectors $\hat{x}$ are unit vectors and the cosine of the angle between two vectors $\hat{x}$ and $\hat{y}$ is $u_j(\theta_i):=\frac{Q_{ji}}{Q_{0i}}$, where $d(x,y)=j$.
The map $x\mapsto \hat{x}$ is called a {\em normalized representation} and the sequence $(u_j(\theta_i))_{j=0}^D$ is called the {\em standard sequence} of $\Gamma$, associated with $\theta_i$. 
As $A U_i=\theta_i U_i$, we have $\theta_i \hat{x}=\sum_{y\sim x} \hat{y}$, and hence the following holds:
\begin{equation}\label{eq:std}
\left\{	
\begin{aligned}
& c_j u_{j-1}(\theta_i)+ a_j u_j(\theta_i)+ b_j u_{j+1}(\theta_i)=\theta_i u_j(\theta_i)\quad(j=1,2,\ldots, D-1), \\
& c_Du_{D-1}(\theta_i)+a_D u_{D}(\theta_i)=\theta_i u_D(\theta_i),
\end{aligned}
\right.
\end{equation}
with $u_0(\theta_i)=1$ and $u_1(\theta_i)=\frac{\theta_i}{k}$. 

\begin{lemma}	\label{lm:biggs}
	{\em (c.f. \cite[Theorem 2.8]{DKT})}
	Let $\Gamma$ be a distance-regular graph with diameter $D$ and $v$ vertices. Let $\theta$ be an eigenvalue of $\Gamma$ and $(u_i)_{i=0}^D$ be the standard sequence associated with $\theta$. Then the multiplicity $m(\theta)$ of $\theta$ as an eigenvalue of $\Gamma$ satisfies
	\begin{align}
		m(\theta) & =\frac{v}{\sum_{i=0}^D k_i u_i^2},                                                         \\
		          & \leq \max\{ \frac{1}{u_1^2},\ldots,\frac{1}{u_{j-1}^2},\frac{\sum_{i=0}^jk_i}{k_ju_j^2}\} \quad(j=1,2,\ldots,D).	\label{eq:biggs2}
	\end{align}
\end{lemma}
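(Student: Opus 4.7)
The strategy is to prove the two assertions separately. For the equality $m(\theta)=v/\sum_{i=0}^D k_i u_i^2$, I would compute $\operatorname{tr}(E(\theta)^2)$ in two ways. Since $E(\theta)$ is the orthogonal projection onto the $\theta$-eigenspace, $E(\theta)^2=E(\theta)$ gives $\operatorname{tr}(E(\theta)^2)=\operatorname{tr}(E(\theta))=m(\theta)$. On the other hand, combining \eqref{eq:pq} with $Q_{0i}=m_i$ yields $E(\theta)=(m(\theta)/v)\sum_{i=0}^D u_i(\theta)A_i$, so that $(E(\theta))_{xy}=(m(\theta)/v)u_{d(x,y)}(\theta)$. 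Summing squared entries by distance classes gives $\operatorname{tr}(E(\theta)^2)=(m(\theta)^2/v)\sum_{i=0}^D k_i u_i^2$, and equating with $m(\theta)$ produces the claimed formula.

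For the inequality, the plan is to argue by contradiction. Assume $m:=m(\theta)$ strictly exceeds the maximum on the right-hand side. Then $u_i^2>1/m$ for each $i=1,\ldots,j-1$ and $k_j u_j^2 > S_j/m$, where $S_j:=\sum_{i=0}^j k_i$. I would first use these pointwise lower bounds to estimate the head $\sum_{i=0}^j k_i u_i^2$ from below, getting something like $1 + (2S_j - 1 - k_j)/m$. To control the tail $\sum_{i>j} k_i u_i^2$, for which no pointwise lower bound on $u_i^2$ is available, I would invoke the orthogonality relation $\sum_{i=0}^D k_i u_i=0$ (coming from $E(\theta)\mathbf{1}=0$ when $\theta\neq k$) together with the Cauchy--Schwarz inequality. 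Writing $\sum_{i>j} k_i u_i = -\sum_{i\le j} k_i u_i$ and applying Cauchy--Schwarz on the tail gives
\[
\sum_{i>j} k_i u_i^2 \;\ge\; \frac{\bigl(\sum_{i\le j} k_i u_i\bigr)^2}{v - S_j},
\]
and combining the head and tail estimates with $\sum_{i=0}^D k_i u_i^2 = v/m$ from the first part should yield the required contradiction. The case $j=D$ is particularly clean, since then $S_j=v$, the tail is empty, and the head estimate alone gives $v > m + 2v - 1 - k_D$, forcing $m < k_D + 1 - v \le 0$.

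The main obstacle lies in the interplay between the head and tail estimates for $j<D$: the Cauchy--Schwarz reduction turns a linear identity into a quadratic lower bound, and this must be merged with the assumed pointwise lower bounds on the head, whose signs are a priori unconstrained. An alternative I would keep in mind is to apply the basic inequality $\lVert E(\theta) f\rVert^2 \le \lVert f\rVert^2$ directly to a vector $f$ supported on $\{x\}\cup \Gamma_1(x)\cup\cdots\cup \Gamma_j(x)$ with coefficients chosen constant on each distance class; using the identity $\sum_{y\in\Gamma_j(x)}\hat{y}=k_j u_j\hat{x}$ in the normalized representation, this quadratic-form computation reduces to a one-variable optimization over the coefficient vector and should reproduce the bound without any explicit tail analysis.
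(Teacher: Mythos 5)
Your first part (the trace computation for the equality $m(\theta)=v/\sum k_i u_i^2$) is correct and standard; note that the paper cites this identity from \cite{DKT} and only proves the inequality, so for that part you have simply supplied the reference's argument.

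For the inequality, your main plan has a genuine gap, and it is precisely the one you flag yourself. After Cauchy--Schwarz the tail bound is
$\sum_{i>j} k_i u_i^2 \ge \bigl(\sum_{i\le j} k_i u_i\bigr)^2/(v-S_j)$,
and the hypotheses $u_i^2>1/m$ give lower bounds on $|u_i|$ but say nothing about the signs of the $u_i$, so $\sum_{i\le j}k_iu_i$ can cancel almost completely. There is no way to extract a usable lower bound on this quantity from the assumed pointwise bounds, and the contradiction does not close for $j<D$. Your fallback (the projection/quadratic-form argument with a vector constant on each $\Gamma_i(x)$, $i\le j$, optimized in the coefficients) \emph{does} work --- it gives the sharper bound $m\le v/\sum_{i=0}^j k_iu_i^2$, from which the stated inequality follows by a mediant estimate --- but it invokes more machinery than is needed.

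The paper's own proof is much shorter and requires no interlacing, no orthogonality relation $\sum k_iu_i=0$, and no Cauchy--Schwarz. Starting from $m=v/\sum_{i=0}^D k_iu_i^2=\bigl(\sum_{i=0}^D k_i\bigr)/\bigl(\sum_{i=0}^D k_iu_i^2\bigr)$, one splits numerator and denominator at the index $j$ and applies the elementary mediant inequality $\frac{a_1+a_2}{b_1+b_2}\le\max\{\frac{a_1}{b_1},\frac{a_2}{b_2}\}$ for positive reals. The first block $\frac{\sum_{i<j}k_i}{\sum_{i<j}k_iu_i^2}$ is the reciprocal of a weighted average of the $u_i^2$ and so is at most $\max\{1/u_1^2,\ldots,1/u_{j-1}^2\}$ (using $u_0=1$); the second block is bounded simply by discarding the nonnegative terms $k_iu_i^2$, $i>j$, from its denominator, leaving $\frac{\sum_{i\ge j}k_i}{k_ju_j^2}$. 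That is the whole argument. (You may also notice, on tracing the paper's proof and its applications with $D=4,5$, that the numerator in the last term of the bound is really $\sum_{i=j}^D k_i$ rather than $\sum_{i=0}^j k_i$ as printed in the lemma statement.) The key step you missed is that the bound is most naturally obtained by manipulating the exact formula for $m$ directly, dropping positive terms from the denominator where convenient, rather than by trying to reconstruct a contradiction from pointwise bounds on the $u_i$.
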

\begin{proof}
	We only give a proof of Equation (\ref{eq:biggs2}). 
	\begin{align*}
		\frac{v}{\sum_{i=0}^D k_i u_i^2}                       &=\frac{\sum_{i=0}^D k_i}{\sum_{i=0}^Dk_i u_i^2}\leq \max\{ \frac{\sum_{i=0}^{j-1} k_i}{\sum_{i=0}^{j-1}k_iu_i^2}, \frac{\sum_{i=j}^D k_i}{\sum_{i=j}^Dk_iu_i^2} \}, \\
		\frac{\sum_{i=0}^{j-1} k_i}{\sum_{i=0}^{j-1}k_iu_i^2}  &\leq \max \{\frac{1}{u_1^2},\ldots,\frac{1}{u_{j-1}^2}\},                                                                                                           \\
		\frac{\sum_{i=j}^D k_i}{\sum_{i=j}^Dk_iu_i^2}          &\leq \frac{\sum_{i=j}^D k_i}{k_iu_i^2}.
	\end{align*}
\end{proof}

\begin{lemma}	\label{lm:f}
	{\em \cite[Proposition 4.1.6]{BCN}}
	Let $\Gamma$ be a distance-regular graph with valency $k$ and diameter $D$. Then the following conditions hold
	\begin{enumerate}[i)]
		\item $1=c_1\leq c_2\leq \cdots \leq c_D$,
		\item $k=b_0\geq b_1\geq \cdots \geq b_{D-1}$, 
		\item $k_i$'s ($i=1,2,\ldots,D$) are positive integers,
		\item the multiplicities are positive integers.
	\end{enumerate}
\end{lemma}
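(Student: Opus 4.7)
The four conclusions split naturally into the monotonicity statements (i) and (ii), which require a short geodesic argument, and the positivity/integrality statements (iii) and (iv), which are essentially immediate from the definitions. I would handle the two groups in turn.

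For (i) and (ii), the plan is a triangle-inequality argument along a shortest path. To prove $c_i \leq c_{i+1}$ for $1 \leq i \leq D-1$, I would fix vertices $x, y$ with $d(x,y) = i+1$ together with a geodesic $x = y_0 \sim y_1 \sim \cdots \sim y_{i+1} = y$. Since $d(y_1, y) = i$, distance-regularity gives $|\g(y) \cap \g_{i-1}(y_1)| = c_i$. I expect that for any $w$ in this set the distance $d(w, x)$ is pinned to exactly $i$ from both sides: $d(w, x) \leq d(w, y_1) + 1 = i$, while $d(w, x) \geq d(x, y) - d(y, w) = i$. This yields the inclusion $\g(y) \cap \g_{i-1}(y_1) \subseteq \g(y) \cap \g_i(x)$ and hence $c_i \leq c_{i+1}$. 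The base case $c_1 = 1$ is immediate since $\g_0(x) = \{x\}$ and $x \in \g(y)$ whenever $y \sim x$. A symmetric argument, now starting from $x, y$ at distance $i$ with geodesic $x = y_0 \sim \cdots \sim y_i = y$, should produce the inclusion $\g(y) \cap \g_{i+1}(x) \subseteq \g(y) \cap \g_i(y_1)$, whose right-hand side has size $b_{i-1}$, thereby establishing $b_i \leq b_{i-1}$.

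For (iii), the point is simply that $k_i = |\g_i(x)|$ is a cardinality and therefore a nonnegative integer; positivity follows from the definition of diameter, which guarantees that every distance in $\{0, 1, \ldots, D\}$ is realized by some pair of vertices, so $k_i \geq 1$. For (iv), each multiplicity $m(\theta_i)$ equals the dimension of the eigenspace of the adjacency matrix $A$ associated with $\theta_i$, which is a positive integer precisely because $\theta_i$ is a genuine eigenvalue of $\g$ (equivalently, $m(\theta_i) = \mathrm{tr}(E_i)$ and $E_i$ is a nonzero orthogonal projection).

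I do not anticipate any real obstacle. The only substantive verification is the two-sided triangle-inequality estimate that forces the distances $d(w, x)$ (respectively $d(w, y_1)$) to collapse to exactly $i$; once this is in place, both monotonicity claims follow immediately from the defining property of a distance-regular graph, and (iii), (iv) follow from the definitions of $k_i$ and of the multiplicity of an eigenvalue.
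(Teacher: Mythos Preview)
Your argument is correct and is precisely the standard proof (the geodesic/triangle-inequality inclusion for (i)--(ii), and the definitional observations for (iii)--(iv)) that one finds in the cited reference. Note, however, that the paper itself gives no proof of this lemma at all: it simply quotes \cite[Proposition~4.1.6]{BCN} and moves on, so there is nothing in the paper to compare against beyond the citation.
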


\begin{lemma}	\label{lm:pu}
	{\em (c.f. \cite[Proposition 3.1]{BKP15})}
	Let $\g$ be a non-bipartite distance-regular graph with valency $k$ and odd girth $g=2t+1$. Then
	\begin{align}		\label{eq:1}
		\sum_{i=0}^t p_i(\eta) u_i\geq 0, 
	\end{align}	
	where $(u_i)_{i=0}^D$ is the standard sequence associated with the smallest eigenvalue $\theta_{\min}$, $\eta$ is any eigenvalue of the $g$-gon, and $p_i(x)$ is defined as the following 
	\begin{equation}	\label{eq:rec}
	\begin{split}
	p_0(x) & =1,                                            \\
	p_1(x) & =x,                                            \\
	p_2(x) & =x^2-2,                                        \\
	p_i(x) & = xp_{i-1}(x)-p_{i-2}(x)\quad (i=3,4,\ldots,t).
	\end{split}
	\end{equation}
\end{lemma}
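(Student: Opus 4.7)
My plan is to exploit the positive semidefiniteness of the Gram matrix of the normalized representations coming from an odd $g$-cycle of $\g$, after identifying the polynomials $p_i$ with (essentially) Chebyshev polynomials.

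First, I would note that the recurrence defining $p_i$ is precisely the three-term relation satisfied by $\phi \mapsto 2\cos(i\phi)$ under the substitution $x = 2\cos\phi$, and the initial values force
\begin{equation*}
p_i(2\cos\phi) = 2\cos(i\phi) \quad (i \geq 1), \qquad p_0 \equiv 1.
\end{equation*}
The eigenvalues of the $g$-gon are $\eta_k = 2\cos(2\pi k/g)$ for $k = 0, 1, \ldots, g-1$, so $p_i(\eta_k) = 2\cos(2\pi k i/g)$ for $i \geq 1$; this is the trigonometric identity that will surface at the end.

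Next I would establish a geodesic-cycle lemma: every odd $g$-cycle $v_0, v_1, \ldots, v_{g-1}$ in $\g$ (one exists since the odd girth is $g$) is isometrically embedded, i.e., $d_\g(v_i, v_j) = \min(|i-j|, g-|i-j|)$. If the graph distance $d$ were strictly less than the cycle distance $s \leq t$, then splicing a shortest $v_i v_j$-path into the cycle in the two possible ways produces closed walks of lengths $d + s$ and $d + g - s$. Since $s + (g-s) = g$ is odd, these have opposite parities, so one of them is an odd closed walk of length strictly less than $g$. It contains an odd cycle of length less than $g$, contradicting the odd-girth hypothesis.

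Now fix $k$, let $\omega$ be a primitive $g$-th root of unity, and in the complexified eigenspace of $\theta_{\min}$ form $w = \sum_{j=0}^{g-1} \omega^{kj}\, \hat{v}_j$. Expanding $0 \leq \|w\|^2$ via $\langle \hat{v}_j, \hat{v}_l \rangle = u_{d_\g(v_j, v_l)}$ and invoking the geodesic-cycle lemma, the summand depends only on $l - j \pmod{g}$; collecting pairs with fixed difference and pairing $m$ with $g - m$ converts complex exponentials into cosines, and the Chebyshev identity rewrites $\|w\|^2$ as $g \sum_{i=0}^t p_i(\eta_k)\, u_i$, giving the claim after division by $g$. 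The main obstacle I anticipate is the geodesic-cycle lemma; once that is in hand, the computation of $\|w\|^2$ is mechanical, modulo the mild bookkeeping that $p_0 = 1$ equals only half of $2\cos 0 = 2$ and so correctly accounts for the isolated $m = 0$ term.
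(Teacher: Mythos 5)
Your proof is correct and is essentially the same argument the paper uses, only unpacked: the paper asserts that the $g$-gon $\Delta$ is isometrically embedded (which you prove with the parity/splicing argument), observes that the $\g$-distance matrices restricted to $V(\Delta)$ satisfy $B_i = p_i(B_1)$, and then cites \cite[Proposition 3.1]{BKP15} for the positivity, whereas you re-derive that positivity directly by testing the (circulant) Gram matrix of $\{\hat{v}_j\}$ against the cycle eigenvector $(\omega^{kj})_j$. Your explicit $\|w\|^2 \ge 0$ computation, together with the identity $p_i(2\cos\phi)=2\cos(i\phi)$, is exactly what the cited proposition encapsulates, so the two proofs coincide in substance; yours is merely more self-contained.
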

\begin{proof}
	Let $\Delta$ be any $g$-gon in $\g$.  
	Let $B_i$ $(i=0,1,\ldots,t)$ be the matrix with rows and columns indexed by $V(\Delta)$, where the $(v,w)$-entry is $1$ whenever $d_\g (v,w)=i$ and $0$ otherwise. 
	Note that $d_\Delta(v,w)=d_\Gamma(v,w)$ for any two vertices $v,w\in V(\Delta)$, and we have $B_i=p_i(B_1)$ with $p_i(x)$ as Equation (\ref{eq:rec}). 
	By \cite[Proposition 3.1]{BKP15}, for any eigenvalue $\eta$ of $\Delta$, we have Equation (\ref{eq:1}). 
\end{proof}

\begin{lemma}	\label{lm:c2}
	{\em (c.f. \cite[Lemma 5.2]{KQ17})}
	Let $\g$ be a distance-regular graph with valency $k$ and smallest eigenvalue $\theta_{\min}$. 
	If $a_1=0$ and $\theta_{\min}<\frac{12-5k}{7}$, then $c_2\leq 2$. 
\end{lemma}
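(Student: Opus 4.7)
The plan is to argue the contrapositive: assume $a_1 = 0$ and $c_2 \ge 3$, and show $\theta_{\min} \ge \frac{12-5k}{7}$.

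The starting point is to write down the initial entries of the standard sequence $(u_i)_{i=0}^D$ associated with $\theta := \theta_{\min}$. The recurrence (\ref{eq:std}) at $j = 1$, combined with $a_1 = 0$, $b_0 = k$, $c_1 = 1$, $b_1 = k-1$, yields the closed-form formulas
\[
u_0 = 1, \qquad u_1 = \frac{\theta}{k}, \qquad u_2 = \frac{\theta^2 - k}{k(k-1)},
\]
which depend only on $k$ and $\theta$. Applying (\ref{eq:std}) at $j = 2$ then expresses $u_3$ as a function of $\theta$, $c_2$, and $a_2$ (with $b_2 = k - c_2 - a_2$).

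Next, I would use that $a_1 = 0$ forces the odd girth $g$ to satisfy $g \ge 5$, so Lemma \ref{lm:pu} applies with $t = \lfloor g/2 \rfloor \ge 2$. Substituting the closed-form $u_1$, $u_2$ (and, if necessary, $u_3$) into the inequality $\sum_{i=0}^t p_i(\eta) u_i \ge 0$ for a carefully chosen eigenvalue $\eta$ of the $g$-gon yields a polynomial inequality in $\theta$, $k$, $c_2$, $a_2$. A short argument shows that in the non-bipartite case one has $c_2 < k$ (otherwise the graph would collapse to $K_{k,k}$), hence a pentagon exists and one may take $g = 5$; larger odd girths push $\eta$ closer to $-2$, which only tightens the bound, so the worst case is indeed $g = 5$.

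Then, injecting $c_2 \ge 3$ and extremising over $a_2 \in \{0, \ldots, k - c_2\}$ (the critical case being $c_2 = 3$ together with a specific extremal value of $a_2$), one arrives at a one-variable polynomial inequality in $\theta$ whose solution set includes $\theta \ge \frac{12-5k}{7}$. The main obstacle is making the algebra yield precisely the constants $12$ and $5$ in the conclusion: this requires the right choice of $\eta$ in Lemma \ref{lm:pu} and a careful treatment of the $a_2$-dependence, perhaps combining two instances of Lemma \ref{lm:pu} (for different eigenvalues of the pentagon) or using the constraint $|u_3| \le 1$ from the $j = 2$ case of (\ref{eq:std}) to eliminate $a_2$. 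Once the right inequality is isolated, the remaining calculation is routine, with the bound expected to be tight at a specific small extremal configuration.
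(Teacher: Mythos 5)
The paper does not use Lemma \ref{lm:pu} at all for this; it uses a local Gram-matrix argument. Since $a_1=0$, any two vertices $x,y$ at distance $2$ together with their $c_2$ common neighbours induce a $K_{2,c_2}$. Taking the normalized representation $z\mapsto\hat z$ attached to $\theta_{\min}$, averaging over the two sides of the bipartition produces a $2\times2$ Gram matrix with entries $\tfrac12(1+u_2)$, $u_1$, $\tfrac1{c_2}(1+(c_2-1)u_2)$, which must be positive semidefinite; evaluating the quadratic form at $(1,1)$ and using $u_1+u_2=\frac{(\theta+k)(\theta-1)}{k(k-1)}<0$ yields $\frac{4c_2}{2+c_2}\le\frac{k-\theta}{1-\theta}$, from which $c_2\le 2$ follows under $\theta<\frac{12-5k}{7}$.

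Your proposal has a genuine gap. The inequality of Lemma \ref{lm:pu} for odd girth $g=5$ is $1+\eta u_1+(\eta^2-2)u_2\ge 0$, and with $a_1=0$ both $u_1=\theta/k$ and $u_2=\frac{\theta^2-k}{k(k-1)}$ depend only on $k$ and $\theta$. The parameter $c_2$ does not appear anywhere in that inequality, so it cannot produce a bound on $c_2$; the case you identify as the worst case ($g=5$, $t=2$) is exactly the case where your method carries no information about $c_2$. The supplementary ideas you gesture at do not repair this: using $|u_3|\le1$ introduces both $a_2$ and $c_2$ in a way that does not cleanly eliminate $a_2$, and combining two eigenvalues $\eta$ of the pentagon only gives further inequalities in $\theta,k$ alone. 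Also, your claim that $c_2<k$ forces a pentagon (hence $g=5$) is false — $a_1=0$ together with $c_2\ge 2$ gives (ordinary) girth $4$, but the odd girth can be arbitrarily large (for instance when $a_2=0$). The missing ingredient is precisely the positivity of the Gram matrix of the representation restricted to a $K_{2,c_2}$ subgraph; this is what couples $c_2$ to $\theta$ and $k$.
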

\begin{proof}
	Choose two vertices $x,y\in V(\g)$ with $d(x,y)=2$. 
	As $a_1=0$, the subgraph induced on ${x,y}\cup\g(x)\cup\g(y)$ is a $K_{2,c_2}$. 
	Let $x\mapsto \hat{x}$ be a normalized representation associated with $\theta=\theta_{\min}$. 
	Consider the Gram matrix of the image of the $K_{2,c_2}$ with the bipartition, we see that 
	$$
	Q=
	\begin{pmatrix}
	\frac{1}{2}(1+u_2) & u_1                         \\
	u_1                & \frac{1}{c_2}(1+(c_2-1)u_2)
	\end{pmatrix}
	$$
	is positive semidefinite, by \cite[Proposition 3.7.1 (iii)]{BCN}. 
	Then $(1,1)Q(1,1)^t\geq 0$, which in turn implies $(u_1 +u_2)((2+c_2) \frac{1-u_2}{u_1+u_2} +4c_2)\geq 0$.
	As $a_1=0$, we see $u_1+u_2=\frac{(\theta+k)(\theta-1)}{k(k-1)}<0$, that is $\frac{4c_2}{2+c_2}\leq- \frac{1-u_2}{u_1+u_2}=\frac{k-\theta}{1-\theta}$. 
	When $k>1$, we have $\theta <\frac{12-5k}{7}<\frac{4-k}{3}$ and $c_2\leq \frac{2k-2\theta}{4-3\theta-k}<3$. 
\end{proof}

\begin{lemma}
	Let $\g$ be a distance-regular graph with valency $k$ and diameter $D$, having smallest eigenvalue $\theta_{\min}$ with associated standard sequence $(u_i)_{i=0}^D$. 
	Then 
	\begin{align}\label{eq:absu}
	|u_{i+1}|\geq \frac{|(\theta_{\min}-a_i)u_i|-c_i|u_{i-1}|}{b_i}\quad (i=0,1,\ldots,D-1).
	\end{align}  
\end{lemma}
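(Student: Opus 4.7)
The plan is to read the inequality off the standard three-term recurrence (\ref{eq:std}) followed by a single application of the reverse triangle inequality. Concretely, for any $i$ with $1 \leq i \leq D-1$, the recurrence gives
\begin{equation*}
b_i u_{i+1} = (\theta_{\min} - a_i) u_i - c_i u_{i-1},
\end{equation*}
and the reverse triangle inequality $|A - B| \geq |A| - |B|$ immediately yields
\begin{equation*}
b_i |u_{i+1}| \geq |(\theta_{\min} - a_i) u_i| - c_i |u_{i-1}|.
\end{equation*}
Since $b_i > 0$ for $i \leq D-1$ (by Lemma \ref{lm:f}(ii) together with $b_0 = k > 0$), dividing through by $b_i$ produces the desired bound.

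The only edge case is $i = 0$, where $u_{-1}$ is not defined. Here one uses the boundary convention $c_0 = 0$ and $a_0 = 0$, so the claimed inequality reduces to $|u_1| \geq |\theta_{\min}|/k$, which is in fact an equality given by the initial condition $u_1 = \theta_{\min}/k$ recorded just below (\ref{eq:std}). Thus no separate argument is needed for this endpoint.

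There is no real obstacle: the lemma is a direct manipulation of the recurrence, and the reverse triangle inequality is the only nontrivial ingredient. The value of the statement lies not in its proof but in its use later in the paper, where the right-hand side will be bounded below to force $|u_{i+1}|$ to be large whenever $u_i$ dominates $u_{i-1}$ and $\theta_{\min}$ is close to $-k$.
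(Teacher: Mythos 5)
Your proof is correct. It takes a genuinely different, and in fact more elementary, route than the paper. The paper derives the bound by invoking \cite[Corollary 4.1.2]{BCN}, which says that for the smallest eigenvalue the standard sequence alternates in sign (so $u_{i+1}$, $-u_i$, $u_{i-1}$ share a sign); with that sign information the two terms $(\theta_{\min}-a_i)u_i$ and $c_i u_{i-1}$ on the right-hand side of $b_i u_{i+1}=(\theta_{\min}-a_i)u_i-c_i u_{i-1}$ turn out to have the \emph{same} sign, so the paper actually obtains the \emph{equality} $b_i|u_{i+1}|=|(\theta_{\min}-a_i)u_i|-c_i|u_{i-1}|$. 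You instead apply the reverse triangle inequality directly, which needs no information about signs and no external citation, at the cost of proving only the inequality rather than the equality. Since the lemma only asserts the inequality, and that is all that is used later (to bound $|u_{i+1}|$ from below), your argument is fully adequate; it trades away a slightly stronger conclusion for a proof that is shorter and self-contained. Your treatment of the boundary case $i=0$ via the conventions $c_0=0$, $a_0=0$ and the initial condition $u_1=\theta_{\min}/k$ is also correct; the paper silently restricts its recurrence to $i\geq 1$ and leaves this case implicit.
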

\begin{proof}
	By Equation (\ref{eq:std}) we see that $u_{i+1}=\frac{(\theta_{\min}-a_i)u_i-c_i u_{i-1}}{b_i}$ $(i=1,2,\ldots,D)$. 
	As $\theta_{\min}<0$ is the smallest eigenvalue, by \cite[Corollary 4.1.2]{BCN}, we see that $u_{i+1}$, $-u_i$ and $u_{i-1}$ has the same sign. 
	The result follows. 
\end{proof}
	
\section{Main Theorem}	\label{sec:main1}
In this section we will prove our main result. 

\noindent
{\em Proof of Theorem \ref{thm:main1}}

\noindent
	If $g=3$, then $\theta_{\min}\geq -\frac{k}{2}$ by \cite[Proposition 2.11]{DKT}. So we may assume $g\geq 5$. 
	Let $t=\frac{g-1}{2}$ and $\Delta$ be a $g$-gon in $\g$.  
	Let $(u_i)^D_{i=0}$ be the standard sequence associated with the smallest eigenvalue $\theta=\theta_{\min}$. 
	
	Assume $c_t\leq \z k$ for some $\z \leq \frac{1}{2}$. By Lemma \ref{lm:pu}, we have $\sum_{i=0}^t p_i(\eta) u_i\geq 0$, where $p_i(x)$ is as Equation (\ref{eq:rec}).

	We claim that there exist constants $N_i$ such that 
	\begin{equation}	\label{eq:ni}
		|u_i-(\frac{\theta}{k})^i|\leq N_i \z \quad (i=0,1,\ldots,D).
	\end{equation} 
	Note that $u_0=1$ and $u_1=\frac{\theta}{k}$. 
	Assume $|u_i-(\frac{\theta}{k})^i|\leq N_i \z $ for some $1\leq i\leq t-1$. 
	As $c_t\leq \z k$, we see that $c_i\leq c_t\leq \z k$ and $b_i=1-c_i \geq (1-\z )k$. 
	Then
	\begin{align*}
		|u_{i+1}-(\frac{\theta}{k})^{i+1}| & =|\frac{\theta u_{i}-c_i u_{i-1}}{b_i}-(\frac{\theta}{k})^{i+1}|                                                                                                   \\
		                                   & \leq |\frac{\theta}{b_i} u_i-\frac{\theta}{b_i} (\frac{\theta}{k})^i|+|\frac{\theta}{b_i} (\frac{\theta}{k})^i -(\frac{\theta}{k})^{i+1}|+\frac{c_i}{b_i}|u_{i-1}| \\
		                                   & = |\frac{\theta}{b_i}|\cdot |u_i-(\frac{\theta}{k})^i|+ \frac{c_i}{b_i}\cdot|(\frac{\theta}{k})^{i+1}|+\frac{c_i}{b_i}\cdot |u_{i-1}|                              \\
		                                   & \leq |\frac{k}{b_i}| N_i \z + \frac{c_i}{b_i} +\frac{c_i}{b_i}                                                                                                      \\
		                                   & \leq (2N_i +4 )\z ,
	\end{align*}
	where $\frac{k}{b_i}\leq \frac{1}{1-\z }\leq 2$ and $\frac{c_i}{b_i}\leq \frac{\z }{1-\z }\leq 2\z $ $(\z \geq \frac{1}{2})$.
	So we may take $N_0=N_1=0$ and $N_i=2N_{i-1}+4$ $(i=2,3,\ldots,t)$. 
	
	Note that $p_i(\eta)$ is an eigenvalue of the distance-$i$ graph of $\Delta$. 
	Hence $|p_i(\eta)|\leq 2$ $(i=0,1,\ldots,t)$, and by Equation (\ref{eq:ni}), we have 

		\begin{align}
			\sum_{i=0}^t p_i(\eta) u_i & \leq \sum_{i=0}^t p_i(\eta) (\frac{\theta}{k})^i +\sum_{i=0}^t |p_i(\eta)|\cdot |u_i-(\frac{\theta}{k})^i| \notag \\
			                           & \leq \sum_{i=0}^t p_i(\eta) (\frac{\theta}{k})^i + M_1 \z , 		\label{eq:2}
		\end{align}
	where $M_1=\sum_{i=0}^t 2N_i$. 

	By Equation (\ref{eq:rec}), we see that $p_i(x)=\lambda_1^i+\lambda_2^i$ $(i=1,2,\ldots,t)$, with $\lambda_i=\frac{1}{2}(x\pm \sqrt{x^2-4})$ $(i=1,2)$. 
	Define $f(x,y)=\sum_{i=0}^t p_i(x)y ^i=\frac{1-(\lambda_1y)^{t+1}}{1-\lambda_1y}+\frac{1-(\lambda_2y)^{t+1}}{1-\lambda_2y}-1$. 
	Note that the eigenvalues of $\Delta$ are $2\cos \frac{2\pi j}{g}$ $(i=0,1,\ldots, g-1)$. 
	Take $\eta=2\cos \frac{2\pi(t-1)}{g}$, then we see 
	\begin{align}	\label{eq:3}
		f(\eta,-1)=-M_2, 
	\end{align}
	where $M_2=1/ \cos \frac{(t-1)\pi}{g}$. 
	In fact, 
	\begin{align*}
		f(2\cos \frac{2\pi j}{g},-1) & = \frac{1-\big(-e^{2\pi \mathrm i\cdot \frac{ j}{g}} \big)^{t+1}}{1-\big(-e^{2\pi \mathrm i\cdot \frac{ j}{g}}\big)} + \frac{1-\big(-e^{-2\pi \mathrm i\cdot \frac{ j}{g}}\big)^{t+1}}{1-\big(-e^{-2\pi \mathrm i\cdot \frac{ j}{g}}\big)}-1 &  \\
		                             & = (-1)^t \cdot \frac{e^{2\pi \mathrm i\cdot \frac{j(t+1)}{g}}+e^{-2\pi \mathrm i\cdot \frac{jt}{g}}}{1+e^{2\pi \mathrm i\cdot \frac{ j}{g}}}                                                                                                 &  \\
		                             & = (-1)^{t+j}/\cos \frac{j\pi}{g}.                                                                                                                                                                                                            &
	\end{align*}

	Take $\z = \min\{\frac{M_2}{2M_1},\frac{1}{2}\}$. 
	Note that $M_1$, $M_2$, and hence $\z $ is determined by $g$. 
	By Equation (\ref{eq:3}), we see $f(\eta,-1)+M_1\z \leq -\frac{M_2}{2}<0$. 
	We also have $f(\eta,0)+M_1\z = 1+M_1\z >0$. 
	By Equation (\ref{eq:1}) and (\ref{eq:2}), we have $0\leq f(\eta,\frac{\theta}{k})+M_1\z $. 
	Take $-(1-\varepsilon_1(\z ))$ as the smallest root $y$ of the equation  $f(\eta,y)+M_1\z = 0$ in the interval $(-1,0)$.  
	It follows that $\theta\geq -(1-\varepsilon_1(\z ))k$. 
	
	Now we consider the case $c_t> \z k$. 
	
	If $c_t>1$, then we claim that the diameter $D\leq \frac{4t}{\z ^2}$ and $\theta_{\min}\geq -(1-\varepsilon_2(\z ))k$ for some constant $\varepsilon_2(\z )>0$. 	
	Without loss of generality, we may assume $4^i t\leq D\leq 4^{i+1}t$ for some integer $i\geq 1$. 
	If $c_{2t-1+j}=c_{2t-1}\neq 1$, by \cite[Theorem 7.1]{DKT}, we see $j\leq 2t-1$, that is $c_{4t-1}>c_{2t-1}$. 
	Then $c_{4t-1}=c_{2t+j}>c_{2t-1+j}$ for some $0\leq j\leq 2t-1$, and $c_{4t-1}\geq 2c_t$ by \cite[Proposition 7.2]{DKT}. 
	This implies $k\geq c_{4^i t}\geq 2^i c_t$, that is $D \leq 4t (\frac{k}{c_t})^2\leq \frac{4t}{\z ^2}$. 	
	Then by \cite[Theorem 1.1]{KQ17}, the set $S$ of distance-regular graphs with valency $k$, diameter $D\leq \frac{4t}{\z ^2}$, smallest eigenvalue $\theta_{\min}\leq -(1-\varepsilon_1(\z ))k$ and odd girth $g$ is finite. 
	Take
	\begin{align*}
	\varepsilon_2(\z )=
	\left\{
		\begin{aligned}
		 & \min_{\g\in S}\frac{k+\theta_{\min}}{k},~ & &\text{if $S\neq \emptyset$}, \\
		 & \varepsilon_1(\z ),~                            & &\text{otherwise}.
		\end{aligned}
	\right.
	\end{align*}
	
	If $c_t=1$, then $k< \frac{1}{\z }$. The set $S'$ of distance-regular graphs with valency $k <\frac{1}{\z }$ and odd girth $g$ is finite, by \cite[Theorem 1.1]{BDK15}. 
	Take 
	\begin{align*}
	\varepsilon_3(\z )=
	\left\{
		\begin{aligned}
		 & \min_{\g\in S'}\frac{k+\theta_{\min}}{k},~ & &\text{if $S'\neq \emptyset$}, \\
		 & \varepsilon_1(\z ),~                             & &\text{otherwise}.
		\end{aligned}
	\right.
	\end{align*}
	
	Take $\varepsilon = \min\{\varepsilon_1(\z ),\varepsilon_2(\z ), \varepsilon_3(\z )\}$ and the result follows. 		
\ \ \ \ \ \ \qed
\begin{remark}
	When the odd girth $g=5$ and $c_2\leq \z k$, we may take $N_2=\frac{2}{1-\z}$. Then $f(x, y)+M_1\z=1+xy+(x^2-2)y^2+\frac{4\z}{1-\z}$. 
	By substituting $\eta=2 \cos \frac{2\pi}{5}$ into $f(\eta,\frac{\theta}{k})+M_1\z\geq 0$, we find an inequality between $\z$ and $\frac{\theta}{k}$. 
	For example, if $\z=0.1$, then $\theta\geq -0.78k$. 
\end{remark}

\section{Distance-regular graphs with relatively small $\theta_{\min}$}
In this section we study distance-regular graphs with relatively small $\theta_{\min}$.
In the rest of this section we will give a proof of Theorem \ref{thm:main2}.

\noindent
{\em Proof of Theorem \ref{thm:main2}}

\noindent
	Assume $\g$ has odd girth $g=2t+1$. 
	Let $(u_i)_{i=0}^D$ be the standard sequence associated with the smallest eigenvalue $\theta=\theta_{\min}$. 
	
	We first consider the case $D=4$. 
 	We may assume $k\geq 5$, otherwise $\g$ is the 9-gon or the Coxeter graph by \cite{BBS86} and \cite[Theorem 1.1]{BK99}. 
		
		As $\theta<-\frac{k}{2}$, by \cite[Proposition 2.11]{DKT}, we have $a_1=0$. 
		If $a_2\neq 0$, that is $t=2$, then substitute $\eta=2\cos\frac{2\pi (t-1)}{g}$ into Equation (\ref{eq:1}) and we get 
		$\frac{(k-t) (2 k+\sqrt{5} t+t+\sqrt{5}-1)}{2k(k-1) }\geq 0$, which implies that $\theta \geq \frac{-2 k-\sqrt{5}+1}{\sqrt{5}+1}$. 
		Combine it with $\theta\leq -\frac{3}{4}k$, we see that $k\leq 2$. Hence $a_2=0$. 
		Note that $\theta\leq -\frac{3}{4}k< \frac{12-5k}{7}$, by Lemma \ref{lm:c2}, we see $c_2\leq 2$.
				
		If $a_3\neq 0$, then consider 
		\begin{equation}	\label{eq:a3}
			\left\{
				\begin{aligned}
					\sum_{i=0}^t p_i(\eta)u_i & \geq 0      \\
					-\frac{D-1}{D}k        & \geq \theta
				\end{aligned}
			\right.
		\end{equation}
		with $\eta=2$, we obtain that $k\leq 4$ if $c_2=1$, and $k\leq 8$ if $c_2=2$. 
		No intersection arrays satisfy Lemma \ref{lm:f}, with $5\leq k\leq 8$, $D=4$, $a_1=a_2=0\neq a_3$, $c_2=2$ and $\theta_{\min}\leq -\frac{3}{4}k$. 
		Hence $a_3=0$. 
		
		Assume $k\geq 36$. 
		Since $k\geq 36$, $c_2\leq 2$ and $\theta\leq -\frac{3}{4}k$, by Equation (\ref{eq:absu}), we obtain $|u_2|\geq 0.5500$ and $|u_3|\geq 0.3926$. 
		Now we consider the intersection matrix $L$ of $\g$, where
		$$
		L=\begin{pmatrix}
			0 & k   & 0   & 0     & 0     \\
			1 & 0   & k-1 & 0     & 0     \\
			0 & c_2 & 0   & k-c_2 & 0     \\
			0 & 0   & c_3 & 0     & k-c_3 \\
			0 & 0   & 0   & c_4   & k-c_4
		\end{pmatrix}. 
		$$
		We see that $k^2+\theta^2 \leq tr(L^2) \leq k^2 + 6k+ c_4(2k-c_4)$, where $c_2\leq 2$ and $c_3\leq c_4$. 
		Since $k\geq 36$ and $\theta\leq -\frac{3}{4}k$, we obtain that $\frac{c_4}{k}\geq 0.2227$. 
		By Lemma \ref{lm:biggs}, we see that $m\leq \max\{\frac{1}{u_1^2},\frac{1}{u_2^2},\frac{k_3+k_4}{k_3u_3^2}\}$. 
		Since $k_3b_3=k_4c_4$, we see $\frac{k_3+k_4}{k_3u_3^2}\leq \frac{1}{u_3^2}(1+\frac{k}{c_4})$. 
		With $|u_1|\geq \frac{3}{4}$, $|u_2|\geq 0.5500$, $|u_3|\geq 0.3926$ and $\frac{c_4}{k}\geq 0.2227$, we obtain $m<36$.  
		By \cite[Theorem 4.4.4]{BCN}, we see that $k\leq m<36$, a contradiction. 
		It follows that $k\leq 35$. 
		Then we check the intersection arrays satisfy Lemma \ref{lm:f}, with $5\leq k\leq 35$, $D=4$, $a_1=a_2=a_3=0\neq a_4$, $c_2=1$ or $2$ and $\theta_{\min}\leq -\frac{3}{4}k$, and we get the folded $9$-cube and odd graph $O_5$. 
		This shows the case $D=4$. 
	
	Now we consider the case $D=5$. 
	Similar to the case $D=4$, we may assume $k\geq 5$, otherwise $\g$ is the $11$-gon, by \cite{BBS86} and \cite[Theorem 1.1]{BK99}. 
	As $\theta<-\frac{k}{2}$, by \cite[Proposition 2.11]{DKT}, we have $a_1=0$. 
	Substitute $\eta=2\cos\frac{2\pi (t-1)}{g}$ with $t=2$ into Equation (\ref{eq:1}), we obtain $\theta \geq \frac{-2 k-\sqrt{5}+1}{\sqrt{5}+1}$. 
	Together with $\theta\leq -\frac{4}{5}k$, we see $k\leq 2$, and hence $a_2=0$.  
	Since $\theta\leq -\frac{4}{5}k<\frac{12-5k}{7}$, by Lemma \ref{lm:c2}, we have and $c_2\leq 2$. 
	
	If $a_3\neq 0$, then consider Equation (\ref{eq:a3}) with $\eta=2$, we obtain that $k\leq 3$ if $c_2=1$, and $k\leq 5$ if $c_2=2$. 
	By \cite[Theorem 1.13.2]{BCN}, no such graphs exist with $k=5$ and $c_2=2$. Hence $a_3=0$. 
	
	We consider $a_4\neq 0$. 
	If $c_3\leq 0.3750k$, combine it with Equation (\ref{eq:a3}),  where  $\eta=-1$ ($g=9$), we see that $k\leq 24$. 
	Assume $k\geq 24$, then $c_3\geq 0.3750k$. 
	By Equation (\ref{eq:absu}), we obtain $|u_2|\geq 0.6243$ and $|u_3|\geq 0.4721$. 
	Note $\frac{k_4}{k_3}=\frac{b_3}{c_4}\leq \frac{1-c_3}{c_3}$ and $\frac{k_5}{k_3}=\frac{b_3b_4}{c_4c_5}\leq (\frac{1-c_3}{c_3})^2$. 
	By Lemma \ref{lm:biggs}, we see that 
	\begin{equation}	\label{eq:m}
	\begin{aligned}
		m & \leq \max\{\frac{1}{u_1^2},\frac{1}{u_2^2},\frac{1}{u_3^2}(1+\frac{1-c_3}{c_3}+(\frac{1-c_3}{c_3})^2)\},
	\end{aligned}
	\end{equation}
	that is $k\leq m\leq 24$ (Lemma \cite[Theorem 4.4.4]{BCN}). 
	No intersection arrays satisfy Lemma \ref{lm:f} with $5\leq k\leq 24$, $D=5$, $a_1=a_2=a_3=0\neq a_4$, $c_2=1$ or $2$ and $\theta\leq -\frac{4}{5}k$. 
	Hence $a_4=0$. 
	
	Assume $k\geq 71$. Then by Equation (\ref{eq:absu}), we see that $|u_2|\geq 0.6348$ and $|u_3|\geq 0.4994$,  where $\theta_1\leq -\frac{4}{5}k$, $c_2=1$ or $2$. 
	Then as $m\geq k\geq 71$, by Equation (\ref{eq:m}), we obtain $c_3\leq 0.2166 k$. It implies $|u_4|\geq 0.3344$ by Equation (\ref{eq:absu}). 
	Consider the intersection matrix $L$, and we see that $k^2+ \theta^2  \leq tr(L^2) \leq k^2+6k+4c_5 k-c_5^2$,	which implies $c_5\geq 0.1440 k$. 
	And we see $k\leq m\leq \min\{\frac{1}{u_1^2},\frac{1}{u_2^2},\frac{1}{u_3^2},\frac{1}{u_4^2}(1+\frac{k}{c_5})\}\leq 71$. 
	It follows that $k\leq 71$. 
	Then we check all intersection arrays satisfy Lemma \ref{lm:f} with $5\leq k\leq 71$, $D=5$, $a_1=a_2=a_3=a_4=0\neq a_5$, $c_2=1$ or $2$ and $\theta\leq -\frac{4}{5}k$ and we obtain the odd graph $O_6$ and the folded $11$-cube. This shows the case $D=5$. 
\ \ \ \ \qed

\vspace{5mm}
\noindent
{\bf Acknowledgments}\\
JHK is partially supported by the National Natural Science Foundation of China (Grant No. 11471009 and Grant No. 11671376) and by 'Anhui Initiative in Quantum Information Technologies' (Grant No. AHY150200). 
ZQ is partially supported by the National Natural Science Foundation of China (Grant No. 11801388).

\bibliographystyle{plain}
\bibliography{ref}
\end{document}